\numberwithin{equation}{section}
\theoremstyle{theorem}
\newtheorem{theorem}{Theorem}[section]
\newtheorem{lemma}[theorem]{Lemma}
\newtheorem{conjecture}[theorem]{Conjecture}
\theoremstyle{definition}
\newtheorem{remark}{Remark}
\newcommand{\rr}{{\mathbb R}}
\newcommand{\nn}{{\mathbb N}}
\newcommand{\al}{\alpha}
\newcommand{\be}{\beta}
\newcommand{\ep}{\epsilon}
\newcommand{\la}{\lambda}
\title{Cantor set arithmetic}
\author{Jayadev S. Athreya}
\address{Department of Mathematics, University of
Washington, Seattle, WA 98195 }
\email{jathreya@math.uiuc.edu}
\author{Bruce Reznick}
\address{Department of Mathematics, University of
Illinois at Urbana-Champaign, Urbana, IL 61801}
\email{reznick@math.uiuc.edu}
\author{Jeremy T. Tyson}
\address{Department of Mathematics, University of
Illinois at Urbana-Champaign, Urbana, IL 61801}
\email{tyson@math.uiuc.edu}
\thanks{J.S.A. was supported by NSF CAREER grant DMS-1559860 and NSF grants DMS-1069153, DMS-1107452, DMS-1107263 and DMS-1107367. B.R. was supported by
Simons Collaboration Grant 280987. J.T.T. was supported by NSF grants DMS-1201875 and DMS-1600650 and Simons Collaboration Grant 353627.}
\date{\today}
\subjclass[2000]{Primary: 28A80, Secondary: 11K55 }
\begin{document}

\begin{abstract}
Every element $u$ of $[0,1]$ can be written in the form $u=x^2y$,
where $x,y$ are elements of the Cantor set $C$. In particular, every
real number between zero and one is the product of three elements of
the Cantor set. 
On the other hand the set of real numbers $v$ { that} can be written
in the form $v=xy$ with $x$ and $y$ in $C$ is a closed subset of
$[0,1]$ with Lebesgue measure strictly between $\tfrac{17}{21}$ and
$\tfrac89$. We also describe the structure of the quotient of $C$ by
itself, that is, the image of 
$C\times (C \setminus \{0\})$ under the function $f(x,y) = x/y$.
\end{abstract}

\maketitle
\section{Introduction.}

One of the first exotic mathematical objects encountered by the
post-calculus student is the Cantor set
\begin{equation}\label{Steinhaus}
C = \left\{\sum_{k= 1}^\infty \al_k3^{-k}, \al_k \in \{0,2\}\right\}.
\end{equation}
(See {Section 2} for
several equivalent definitions of $C$.) One of its most beautiful
properties is that
\begin{equation}\label{C+C}
C+C := \{x+y: x,y \in C\}
\end{equation}
is equal to $[0,2]$. (The whole interval is produced by adding dust to itself!)

The first published proof of \eqref{C+C} was by Hugo Steinhaus
\cite{steinhaus:sums} in 1917. The result was later rediscovered by
John Randolph in 1940 \cite{ran:cantor}. 

We remind the reader of the beautiful constructive proof of
\eqref{C+C}. It is enough to prove the containment $C+C \supset
[0,2]$. Given $u \in [0,2]$, consider the ternary representation for
$u/2$: 
\begin{equation}
\frac u2= \sum_{k= 1}^{\infty} \frac{\ep_k}{3^k}, \quad \ep_k \in \{0,1,2\}.
\end{equation}
Define pairs $(\al_k,\be_k)$ to be $(0,0), (2,0), (2,2)$ according to
whether $\ep_k = 0, 1, 2$, respectively, and define elements $x, y \in
C$ by
\begin{equation}
x = \sum_{k= 1}^{\infty} \frac{\al_k}{3^k}, \qquad
y = \sum_{k= 1}^{\infty} \frac{\be_k}{3^k}.
\end{equation}
Since $\al_k + \be_k = 2\ep_k$, $x+y = 2\cdot\frac u2 = u$.
%Then $2\ep_k = \al_k + \be_k$ implies that $u=x+y$.

While presenting this proof in a class, one of the authors 
(BR) 
wondered what would happen if addition were replaced by the other arithmetic
operations. Another author 
(JT) 
immediately pointed out that subtraction is easy, because of a symmetry of $C$:
\begin{equation*}
x = \sum_{k=1}^\infty \frac{\ep_k}{3^k} \in C \iff 1-x = \sum_{k=1}^\infty
\frac{2-\ep_k}{3^k} \in C.
\end{equation*}
Thus
\begin{equation*}
C-C := \{x-y: x,y \in C\} =  \{x-(1-z): x,z \in C\} = C+C -1 = [-1,1].
\end{equation*}
More generally, to understand the structure of linear combinations
$aC+bC$, $a,b\in\rr$, it suffices to consider the case $a=1$ and $0\le
b\le 1$. (If $a>b>0$, then $aC+bC=a(C+(b/a)C)$; the remaining cases
are left to the reader.) The precise structure of the linear
combination set 
$$
aC+bC
$$
was obtained by Paw{\l}owicz \cite{paw:cantor}, who extended an
earlier result by Utz \cite{utz:cantor}, published in 1951. Utz's
result states that  
\begin{equation}\label{eq:utz}
C + b C = [0,1+b] \qquad \mbox{for every $\tfrac13 \le b \le 3$.}
\end{equation}

Multiplication is trickier. The inclusion $C \subset [0,\frac13] \cup
[\frac23, 1]$ 
implies that any element of the interval $(\frac 13,\frac49)$ cannot
be written as a product 
of two elements from $C$. { Thus the measure of the product of $C$
  with itself is at most $\tfrac89$.} This paper grew out of a study
of multiplication on $C$. 

In this paper we will prove the following results.

\begin{theorem}
\label{maintheorem}
\
\begin{enumerate}
\item Every $u \in [0,1]$ can be written as $u = x^2y$ for some $x,y \in C$.
\item The set of quotients from $C$ can be described as follows:
\begin{equation}\label{E:quot}
\left\{ \frac xy: x, y \in C, y\neq 0\right\} = \bigcup_{m = -\infty}^\infty
\left[\frac 23 \cdot 3^m ,  \frac 32 \cdot 3^m\right].
\end{equation}
\item The set $\{xy:x,y \in C\}$ is a closed set with Lebesgue measure
  { strictly greater than} 
  $\frac{17}{21}$.
\end{enumerate}
\end{theorem}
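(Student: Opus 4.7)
\section*{Proof proposal.}

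For part (1), I plan to exploit the self-similarity $u \in \{x^2y : x, y \in C\} \iff u/9 \in \{(x/3)^2 y : x, y \in C\}$ to reduce to the range $u \in [1/9, 1]$. For such $u$ it suffices to find $x \in C_1 := C \cap [2/3, 1]$ with $u/x^2 \in C$. Parametrizing $x = (2+c)/3$ with $c \in C$, the goal becomes: for every $u \in [1/9, 1]$, produce $c \in C$ such that $9u/(2+c)^2 \in C$. The main obstacle is that multiplication cannot be controlled digit-by-digit in the way that addition can; one likely needs either a ternary-expansion construction tailored to this product, or an appeal to Newhouse-type thickness theory showing that the Cantor-like sets $C$ and $\{9u/(2+c)^2 : c \in C\}$ must intersect.

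For part (2), the inclusion $\subset$ is a direct consequence of the fact that any nonzero element of $C$ can be written uniquely as $3^{-m}\tilde x$ with $\tilde x \in C \cap [2/3, 1]$ and $m \ge 0$ (read off from the ternary expansion), so that ratios $x/y$ become $3^{n-m}(\tilde x/\tilde y)$ with $\tilde x/\tilde y \in [2/3, 3/2]$. For the reverse inclusion, given $r \in [2/3, 3/2]$ I would look for $x, y \in C_1$ with $x/y = r$. Writing $x = (2+a)/3$ and $y = (2+b)/3$ with $a, b \in C$, this becomes $a - rb = 2(r-1)$. Utz's identity \eqref{eq:utz} together with the symmetry $C = 1 - C$ gives $C - rC = [-r, 1]$ for every $r \in [1/3, 3]$; a short calculation shows $2(r-1) \in [-r, 1]$ precisely when $r \in [2/3, 3/2]$, so the required $a, b \in C$ exist.

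For part (3), closedness follows since the image of the compact set $C \times C$ under continuous multiplication is compact. For the measure estimate I would establish two structural identities for $P := C \cdot C$:
\begin{equation*}
P \cap [0, \tfrac13] = \tfrac13 P \qquad \text{and} \qquad P \cap [\tfrac49, 1] = C_1 \cdot C_1 = \tfrac19 (2+C) \cdot (2+C).
\end{equation*}
The first holds because any product at most $\tfrac13$ must have a factor in $C_0 := C \cap [0, \tfrac13]$, and $C_0 \cdot C = P/3$; the second because a product at least $\tfrac49$ forces both factors into $C_1$. Together with the gap $(\tfrac13, \tfrac49)$, these give $\mu(P) = \tfrac13 \mu(P) + \tfrac19 \mu((2+C)(2+C))$, hence $\mu(P) = \tfrac16 \mu((2+C)(2+C))$, so the task reduces to proving $\mu((2+C)(2+C)) > 34/7$. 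A first-pass envelope argument yields $(2+C)(2+C) \subset [4, 7] \cup [64/9, 9]$, of total length $44/9$; the main obstacle is then showing that the internal gaps (obtained by iterating the decomposition $(2+C)(2+C) = \tfrac19[(6+C)(6+C) \cup (6+C)(8+C) \cup (8+C)(8+C)]$) have total measure strictly less than $44/9 - 34/7 = 2/63$, which requires careful inductive bookkeeping of gap contributions across all scales.
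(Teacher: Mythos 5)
Your three parts fare very differently, so I address them separately.

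Part (1) is where the serious gap lies. You reduce to finding $c\in C$ with $9u/(2+c)^2\in C$, but you explicitly leave that central step open. The paper's key device, absent from your plan, is an overlap lemma: for $F(x,y)=x^2y$ and equal-length intervals $I,J\subset[\tfrac23,1]$ one has $F(\ddot I,\ddot J)=F(I,J)$, where $\ddot I$ is $I$ with its middle third deleted; this is checked by verifying that the images of the four sub-rectangles overlap, using only $a,b\ge\tfrac23$. Iterating through the middle-third construction and passing to the limit by compactness gives $F(\widetilde C^2)=[\tfrac{8}{27},1]$, and scaling by powers of $3$ finishes. Your fallback of Newhouse thickness is not a safe substitute: the ternary Cantor set has thickness exactly $1$, so the gap lemma's hypothesis $\tau_1\tau_2>1$ fails at the boundary, and the thickness of the nonlinear image $\{9u/(2+c)^2: c\in C\}$ would itself need to be estimated. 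As written, part (1) is a plan, not a proof.

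Part (2) is correct and genuinely different from the paper. The paper applies the same overlap lemma to $F(u,v)=u/v$ on $[\tfrac23,1]^2$, checking four subinterval quotients by hand; you instead convert $\tilde x/\tilde y=r$ with $\tilde x=(2+a)/3$, $\tilde y=(2+b)/3$ into the linear equation $a-rb=2(r-1)$ and invoke Utz's theorem (via $C-rC=[-r,1]$ for $\tfrac13\le r\le3$, using $C=1-C$); the solvability condition $2(r-1)\in[-r,1]$ holds exactly for $r\in[\tfrac23,\tfrac32]$, which even explains where those endpoints come from. Since the paper proves Utz's theorem as a warm-up, this is a legitimate and arguably cleaner route; the forward inclusion via $x=3^{-m}\tilde x$ is the same as the paper's.

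Part (3): your structural identities are correct, and the reduction $\mu(P)=\tfrac16\mu(Q)$ with $Q=(2+C)(2+C)$ is equivalent to the paper's $\mu(g(C^2))=\tfrac32\,\mu(g(\widetilde C^2))$. But the last step cannot succeed as described. At level $n$ the only losses come from the $2^{n-1}$ self-products $I\cdot I$, each removing an interval of length exactly $t^2=3^{-2n-2}$ (products of distinct intervals produce no gaps, by the same overlap computation). Summing these per-level bounds over $n\ge2$ gives, in the scale of $Q$, exactly $9\sum_{n\ge2}2^{n-1}3^{-2n-2}=\tfrac{2}{63}$: your budget is saturated, not beaten, so no bookkeeping of these bounds yields the strict inequality. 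Strictness requires showing that at least one removed interval is actually covered by a product of two distinct deeper intervals; the paper exhibits this at level $4$, where the interval missing from $[\tfrac{62}{81},\tfrac{63}{81}]^2$ is covered by $[\tfrac{162}{243},\tfrac{163}{243}]\cdot[\tfrac{216}{243},\tfrac{217}{243}]$. Without such an example your argument tops out at $\mu(P)\ge\tfrac{17}{21}$.
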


In particular, part (1) of Theorem \ref{maintheorem} implies that
every real number in the interval $[0,1]$ is the product of three
elements of $C$. 

In words, \eqref{E:quot} says that each positive real number is a quotient of two elements of
the Cantor set if and only if either the left-most nonzero digit in
the ternary representation of $u$ is ``2,'' or the left-most nonzero digit
is ``1,'' but the first subsequent non-``1'' digit is ``0,'' not ``2.''

This paper is organized as follows. We begin in section \ref{sec:tools} with
several different descriptions of the Cantor set, and then the key tools, all of which
are accessible to { students in} a good undergraduate analysis class. As a warmup, we
use these tools to give a short proof of Utz's result \eqref{eq:utz}
in Section \ref{subsec:plus-and-minus}. Sections \ref{subsec:times},
\ref{subsec:divide} and \ref{subsec:times-again} are devoted to the
proofs of {parts (1), (2) and (3) of} Theorem~\ref{maintheorem},
respectively. Sprinkled throughout are relevant open questions. 
This article began as a standard research paper, but we realized that
many of our main results might be of interest to a wider audience. In
Section \ref{sec:final-remarks}, we discuss some of our other results,
which will be 
published elsewhere, with different combinations of co-authors.

\section{Tools.}\label{sec:tools}

We begin by recalling several equivalent and well-known definitions of the Cantor
set. See Fleron \cite{fl:cantor} and the references within for an excellent overview of the history of the Cantor set and the context in which several of these definitions first arose.

The standard ternary representation of a real number $x$ in $[0,1]$ is
\begin{equation}\label{E:ternrep}
x = \sum_{k=1}^{\infty} \frac {\al_k(x)}{3^k},\qquad  \al_k(x) \in \{0,1,2\}.
\end{equation}
This representation is unique, except for the {\it ternary rationals},
$\{ \frac m{3^n}, m,n \in \mathbb N\}$, which have two
ternary representations. Supposing $\al_n > 0$ and $m \neq 0 \mod
3$, so that $\al_n \in \{1,2\}$ below, we have
\begin{equation}\label{E:ternexc}
\begin{split}
\frac m{3^n}
&= \sum_{k=1}^{n-1} \frac {\al_k}{3^k} + \frac{\al_n}{3^n} + \sum_{k=n+1}^{\infty} \frac {0}{3^k}\\
&= \sum_{k=1}^{n-1} \frac {\al_k}{3^k} + \frac{\al_n-1}{3^n} + \sum_{k=n+1}^{\infty} \frac {2}{3^k}.
\end{split}
\end{equation}

The {\it Cantor set} $C$ consists those $x \in [0,1]$ { admitting} a ternary representation as in \eqref{E:ternrep} with
$\al_k(x) \in \{0,2\}$ for all $k$. Note that $C$ also contains those ternary rationals as in \eqref{E:ternexc}  whose final digit is ``1.'' These may be
transformed as above into a representation in which $\al_n(x) = 0$  and $\al_k(x) = 2$ for $k > n$.
As noted earlier, $x \in C$ if and only if $1-x \in C$. Further, if $k \in \mathbb N$, then
\begin{equation}
x \in C  \implies 3^{-k}x \in C.
\end{equation}
This definition arises in dynamical systems, as the Cantor set $C$ can be viewed as an invariant set for the map $x \mapsto 3x \bmod 1$, or equivalently, as the image of an invariant set $C'$ for the one-sided shift map $\sigma$ acting on $\Omega = \{0,1,2\}^\nn$. Given a sequence $\omega = (\omega_n)_{n=1}^\infty = (\omega_1,\omega_2,\ldots)$ in $\Omega$,
$$
\sigma\omega = (\omega_2,\omega_3,\ldots)\,.
$$
Letting $C' = \{0,2\}^\nn \subset \Omega$, we realize the Cantor set $C$ as the image of $C'$ under the coding map $T:\Omega \to [0,1]$ given by
$$
T(\omega) = \sum_{n=1}^\infty \omega_n 3^{-n}.
$$

We now present the usual ``middle-third'' definition of the Cantor
set: Define $C_n = \{x: \al_k(x) \in \{0,2\}, 1 \le k \le n\}$, which is
a union of $2^n$ closed intervals of length $3^{-n}$, written as
\begin{equation}
C_n = \bigcup_{i=1}^{2^n} I_{n,i}.
\end{equation}
The left-hand endpoints of the $I_{n,i}$'s comprise the set
\begin{equation}
\left \{\sum_{k=1}^{n} \frac{\ep_k}{3^k}: \ep_k \in \{0,2\}\right\}.
\end{equation}
The right-hand endpoints  have ``1'' as their final nonzero
ternary digit when written as a finite ternary expansion.

The  more direct definition of $C$ is as a nested intersection of
closed sets:
\begin{equation}
C = \bigcap_{n=1}^\infty C_n;\qquad  C_1 \supset C_2 \supset C_3 \supset \cdots.
\end{equation}
This definition is standard in fractal geometry, where the Cantor set $C$ is seen as the invariant set for the pair of contractive linear mappings $f_1(x) = \tfrac13 x$ and $f_2(x) = \tfrac13 x + \tfrac23$ acting on the real line. That is, $C$ is the unique nonempty compact set that is fully invariant under $f_1$ and $f_2$:
$$
C = f_1(C) \cup f_2(C).
$$

Observe that each ``parent'' interval $I_{n,i} = [a,a+\frac 1{3^n}]$ in $C_n$ has two ``child'' intervals
\begin{equation}
I_{n+1,2i-1} = \left[ a,a+\tfrac 1{3^{n+1}}\right],\qquad
I_{n+1,2i} = \left[a+\tfrac2{3^{n+1}},a+\tfrac3{3^{n+1}}\right]
\end{equation}
in $C_{n+1}$, and $C_{n+1}$ is the union of all  children
intervals whose parents are in $C_n$.

It is useful to introduce the following notation to
represent the omission of the middle third:
\begin{equation}
I = [a,a+3t] \implies \ddot I = [a,a+t] \cup [a+2t,a+3t].
\end{equation}
 Using this notation,
\begin{equation}
C_{n+1} = \bigcup_{i=1}^{2^{n+1}} I_{n+1,i} = \bigcup_{i=1}^{2^n} \ddot I_{n,i}.
\end{equation}

It will also be useful, for studying products and quotients, to give a third
definition. Let $\widetilde C$ = $C \cap [1/2,1] = C \cap
[2/3,1] = \frac 23 + \frac 13\cdot C$. Then by examining
the smallest $k$ for which $\ep_k = 2$, we see that
\begin{equation}
C = \{0\} \cup \bigcup_{k=0}^{\infty} 3^{-k}\widetilde C.
\end{equation}
Similarly, let $\widetilde C_n$ = $C_n \cap [1/2,1]$, consisting of
$2^{n-1}$ closed intervals of length $3^{-n}$:
\begin{equation}
\widetilde C_n = \bigcup_{i=2^{n-1}+1}^{2^n} I_{n,i}.
\end{equation}
Then
\begin{equation}
\widetilde C = \bigcap_{n=1}^\infty \widetilde C_n.
\end{equation}
And, analogously,
\begin{equation}
\widetilde C_{n+1} = \bigcup_{i=2^{n}+1}^{2^{n+1}} I_{n+1,i} =
\bigcup_{i=2^{n-1}+1}^{2^n} \ddot I_{n,i}.
\end{equation}
By looking at the left-hand endpoints, we see that
each interval $I_{n,i} \neq [0,\frac 1{3^n}]$ can be written as
$\frac 1{3^{n-k}} I_{k,j}$ for some $I_{k,j} \in \widetilde C_k$; hence
\begin{equation}
\begin{gathered}
C_n = \left[0,\frac 1{3^n}\right] \cup \bigcup_{k=1}^{n} \frac
1{3^{n-k}} \widetilde C_k.
\end{gathered}
\end{equation}

The keys to our method lie in  two lemmas which might appear on a
first serious analysis exam. (Please do not send such exams to the authors!)

\begin{lemma}\label{lemma1}
Suppose  $\{K_i\}
\subset \mathbb R$ are nonempty compact sets, $K_1
\supseteq K_2 \supseteq K_3 \supseteq \cdots$, and $K = \cap K_i$.

\noindent (i)  If $(x_j) \to x$, $x_j \in K_j$, then $x \in K$.

\noindent (ii) If $F:\mathbb R^m
\to \mathbb R$ is continuous, then $F(K^{m}) = \cap F(K_i^{m})$.
\end{lemma}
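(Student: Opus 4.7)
The plan is to prove (i) first, then use it as a stepping stone for the harder direction of (ii). For (i), fix any index $i$. For all $j \ge i$, the nesting gives $x_j \in K_j \subseteq K_i$. Thus the tail of the sequence $(x_j)$ lies entirely in $K_i$, which is closed, so the limit $x$ must belong to $K_i$. Since $i$ was arbitrary, $x \in \bigcap_i K_i = K$.

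For (ii), the inclusion $F(K^m) \subseteq \bigcap_i F(K_i^m)$ is immediate from $K \subseteq K_i$: any $y = F(\mathbf{x})$ with $\mathbf{x} \in K^m$ also satisfies $\mathbf{x} \in K_i^m$, hence $y \in F(K_i^m)$ for every $i$. The substantive direction is the reverse inclusion. Suppose $y \in \bigcap_i F(K_i^m)$. For each $i$ pick $\mathbf{x}_i \in K_i^m$ with $F(\mathbf{x}_i) = y$. Since every $\mathbf{x}_i$ lies in $K_1^m$, which is compact, extract a convergent subsequence $\mathbf{x}_{i_k} \to \mathbf{x}$.

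To see $\mathbf{x} \in K^m$, I apply (i) coordinatewise: fix a coordinate $\ell$ and an index $j$. For $k$ large enough that $i_k \ge j$, the $\ell$-th entry $(\mathbf{x}_{i_k})_\ell$ lies in $K_{i_k} \subseteq K_j$. Since $K_j$ is closed and $(\mathbf{x}_{i_k})_\ell \to \mathbf{x}_\ell$, we conclude $\mathbf{x}_\ell \in K_j$ for every $j$, hence $\mathbf{x}_\ell \in K$. Finally, continuity of $F$ yields $F(\mathbf{x}) = \lim_k F(\mathbf{x}_{i_k}) = y$, so $y \in F(K^m)$.

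The only real obstacle is the reverse inclusion in (ii); it requires a compactness-plus-diagonalization flavored argument (sequential compactness of $K_1^m$ to extract the limit point, combined with the closedness carried by (i) to trap that limit inside every $K_j$). Everything else is bookkeeping.
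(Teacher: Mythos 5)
Your proof is correct and follows essentially the same route as the paper: the easy inclusion from $K\subseteq K_i$, then Bolzano--Weierstrass in the compact set $K_1^m$ to extract a convergent subsequence of preimages, with part (i) (which you prove directly via closedness rather than by contradiction, an immaterial difference) trapping the limit in $K$ and continuity of $F$ finishing the argument. No gaps.
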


\begin{proof}
(i). If $x \notin K$, then $x \notin K_r$ for some
$r$. Since $K_r^c$ is
open, there exists $\ep > 0$ so that $(x - \ep, x+ \ep) \subseteq K_r^c
\subseteq K_{r+1}^c \cdots$ and hence
$|x_j - x| \ge \ep$ for $j \ge r$, a contradiction to $x_j \to x$.

(ii). Since $K \subseteq K_i$, { we have} $F(K^{m}) \subseteq \cap F(K_i^{m})$. Conversely, suppose $u \in \cap F(K_i^{m})$. We need to find
$x \in K^{m}$ such that $F(x) = u$. For each $i$, choose $x_i = (x_{i,1}, \dots, x_{i,m}) \in K_i^m$ so that $F(x_i) = u$. Since $K_1^{m}$ is compact,
the Bolzano--Weierstrass theorem implies { that} the sequence $(x_i)$ has a convergent subsequence
$x_{r_j} = (x_{r_j,1}, \dots x_{r_j,m}) \to y = (y_1,\dots, y_m)$.
Applying (i) to the subsequence $K_{r_1} \supseteq K_{r_2} \supseteq
K_{r_3} \supseteq \cdots$, we see that each $y_k \in K$ and
since $F$ is continuous, $F(y) = u$, as desired.
\end{proof}

If we perform the middle-third construction with an initial
interval of $[a,b]$, it is easy to see that the limiting object is a
translate of the Cantor set, specifically $C_{a,b}:= a +(b-a)C$.

\begin{lemma}\label{lemma2}
Suppose $F: \mathbb R^m \to \mathbb R$ is continuous, and suppose
that for every  choice of disjoint or identical subintervals $I_k
\subset [a,b]$ of common length,
\begin{equation}
F(I_1,\dots,I_m) = F(\ddot I_1,\dots, \ddot I_m).
\end{equation}
Then $F(C_{a,b}^{m}) = F([a,b]^{m})$.
\end{lemma}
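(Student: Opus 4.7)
My plan is to combine the nested-compact description of $C_{a,b}$ with an induction on the middle-third construction, then close via Lemma~\ref{lemma1}(ii).

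Write $C_{a,b}^{(n)}$ for the $n$th stage of the middle-third construction starting from $[a,b]$, so that $C_{a,b}^{(n)} = \bigcup_{i=1}^{2^n} I_{n,i}$ is a disjoint union of $2^n$ closed intervals of common length $3^{-n}(b-a)$, and $C_{a,b} = \bigcap_{n\ge 0} C_{a,b}^{(n)}$ with $C_{a,b}^{(0)}=[a,b]$. Crucially, any two level-$n$ pieces $I_{n,i}$ and $I_{n,j}$ are either disjoint or identical, so any $m$-tuple drawn from $\{I_{n,1},\dots,I_{n,2^n}\}$ falls under the hypothesis of the lemma.

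The heart of the argument is the inductive claim
\begin{equation*}
F\bigl((C_{a,b}^{(n)})^m\bigr) = F\bigl([a,b]^m\bigr) \qquad \text{for every } n \ge 0.
\end{equation*}
The base case $n=0$ is trivial. For the induction step, I would write
\begin{equation*}
(C_{a,b}^{(n)})^m = \bigcup_{(i_1,\dots,i_m)} I_{n,i_1}\times\cdots\times I_{n,i_m},
\end{equation*}
and then observe that passing to the next stage amounts to replacing each $I_{n,i_k}$ by $\ddot I_{n,i_k}$, giving
\begin{equation*}
(C_{a,b}^{(n+1)})^m = \Bigl(\bigcup_i \ddot I_{n,i}\Bigr)^m = \bigcup_{(i_1,\dots,i_m)} \ddot I_{n,i_1}\times\cdots\times \ddot I_{n,i_m}.
\end{equation*}
Applying the hypothesis of the lemma to each tuple $(I_{n,i_1},\dots,I_{n,i_m})$ (which is legal precisely because these intervals are disjoint or identical, and share a common length) yields
\begin{equation*}
F(I_{n,i_1}\times\cdots\times I_{n,i_m}) = F(\ddot I_{n,i_1}\times\cdots\times \ddot I_{n,i_m}).
\end{equation*}
Taking the union over all $(i_1,\dots,i_m)$ and using that $F$ commutes with unions on the image side gives $F((C_{a,b}^{(n)})^m) = F((C_{a,b}^{(n+1)})^m)$, completing the induction.

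Finally, since $[a,b]^m \supseteq C_{a,b}^{(1)\,m} \supseteq C_{a,b}^{(2)\,m}\supseteq\cdots$ is a decreasing sequence of nonempty compact sets with intersection $C_{a,b}^m$, Lemma~\ref{lemma1}(ii) applied to the continuous $F$ gives
\begin{equation*}
F(C_{a,b}^m) = \bigcap_{n\ge 0} F\bigl((C_{a,b}^{(n)})^m\bigr) = F\bigl([a,b]^m\bigr),
\end{equation*}
which is the desired equality. I expect the main subtlety to be simply verifying that level-$n$ pieces are always pairwise disjoint or identical so that the hypothesis is applicable; the rest is a clean induction plus one invocation of Lemma~\ref{lemma1}.
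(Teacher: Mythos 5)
Your proposal is correct and follows essentially the same route as the paper: show $F((C_{a,b}^{(n)})^m)=F((C_{a,b}^{(n+1)})^m)$ by decomposing the product set into tuples of level-$n$ intervals (which are pairwise disjoint or identical, so the hypothesis applies), then conclude with Lemma~\ref{lemma1}(ii). The only cosmetic difference is that the paper first reduces to $[a,b]=[0,1]$ by composing $F$ with a linear map, whereas you work with general $[a,b]$ directly; both are fine.
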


\begin{proof}
We prove the result for $[a,b] = [0,1]$; the result follows generally by
composing $F$ with an appropriate linear function. Let
\begin{equation}
C_k = \bigcup_{j=1}^{2^k} I_{k,j},
\end{equation}
where each interval $I_{k,j}$  has length $3^{-k}$.
It follows that
\begin{equation}
F(C_k^{m}) = \bigcup_{1\le j_1,\dots,j_m\le 2^k} F(I_{k,j_1},\dots, I_{k,j_m}),
\end{equation}
where for each pair $(\ell,\ell')$, $I_{k,j_\ell}$ and $I_{k,j_\ell'}$
are either identical or disjoint.
Since
\begin{equation}
C_{k+1} = \bigcup_{j=1}^{2^k} \ddot I_{k,j},
\end{equation}
the hypothesis implies that  $F(C_k^{m}) = F(C_{k+1}^{m})$, and
the result then follows from Lemma \ref{lemma1}(ii).
\end{proof}

We only apply Lemma \ref{lemma2}  in the cases that $m=2$ and $[a,b] =
[0,1]$ and $[\frac23,1]$. (In the latter case, when $F(x,y) = xy$ or $x/y$,
it is helpful to have control of the ratio $x/y$.)

\section{Arithmetic on the Cantor set.}

\subsection{Addition and subtraction}\label{subsec:plus-and-minus}

Sums and differences of Cantor sets have been widely studied in
connection with dynamical systems. In this section we give a brief
proof of the following result of Utz \cite{utz:cantor}. Further
information about sums of Cantor sets can be found in \cite{chm:sums} and
\cite{chm:sums2}. 

\begin{theorem}[Utz]
If $\la \in [\frac 13,3]$, then every element $u$ in $[0,1+\la]$ can be written in the form $u = x + \la y$ for $x,y \in C$.
\end{theorem}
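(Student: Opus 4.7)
The plan is to apply Lemma \ref{lemma2} to the continuous map $F:\rr^2\to\rr$ defined by $F(x,y)=x+\la y$, with $[a,b]=[0,1]$. Since $F([0,1]^2)=[0,1+\la]$, once the hypothesis of the lemma is verified we obtain $F(C^2)=F([0,1]^2)=[0,1+\la]$, which is exactly what the theorem asserts.

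Before checking the hypothesis, I would halve the work with a symmetry. Because $x+\la y = \la(y+\la^{-1}x)$, the desired identity for $\la\in[1,3]$ reduces, after rescaling by $\la$ and swapping the roles of $x$ and $y$, to the corresponding identity with parameter $\la^{-1}\in[1/3,1]$. So it suffices to handle $\la\in[1/3,1]$.

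Now fix two subintervals $I_j=[a_j,a_j+3t]\subset[0,1]$, $j=1,2$, of common length, either disjoint or identical. Setting $A=a_1+\la a_2$, a direct computation gives
\begin{equation*}
F(I_1,I_2) = \left[A,\,A+3t(1+\la)\right],
\end{equation*}
while $F(\ddot I_1,\ddot I_2)$ is a union of four closed subintervals of common length $t(1+\la)$, with left endpoints $A$, $A+2\la t$, $A+2t$, and $A+2t(1+\la)$. The inclusion $F(\ddot I_1,\ddot I_2)\subset F(I_1,I_2)$ is automatic; the reverse inclusion amounts to showing that these four subintervals cover the larger one without leaving a gap.

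This covering check is the only real step. For $\la\in[1/3,1]$ the four left endpoints above are already listed in increasing order, and successive intervals overlap precisely when
\begin{equation*}
2\la t \le t(1+\la), \qquad 2t \le t(1+3\la), \qquad 2t(1+\la) \le t(3+\la),
\end{equation*}
i.e., when $\la\le 1$, $\la\ge 1/3$, and $\la\le 1$, respectively. All three hold on $[1/3,1]$, so $F(I_1,I_2)=F(\ddot I_1,\ddot I_2)$ and Lemma \ref{lemma2} delivers the theorem. The main obstacle, such as it is, lies in writing down the four subintervals carefully and identifying the three boundary inequalities; once that is done, the threshold values $\la=1/3$ and $\la=3$ appear naturally as the sharp endpoints at which coverage just barely holds.
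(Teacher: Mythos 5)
Your proposal is correct and follows essentially the same route as the paper: reduce to $\la\in[\tfrac13,1]$ by the rescaling $x+\la y=\la(y+\la^{-1}x)$, then verify the hypothesis of Lemma \ref{lemma2} by computing the four subintervals of common length $t(1+\la)$ and checking that consecutive ones overlap, with $\la\le 1$ and $\la\ge\tfrac13$ appearing as exactly the needed inequalities. The only cosmetic difference is that the paper coalesces the intervals in pairs before merging, while you order the left endpoints and check the three successive overlaps directly; both computations are identical in substance.
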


We include this proof in order to introduce the main ideas in the proof of Theorem \ref{maintheorem} in a simpler context. The key tool is Lemma \ref{lemma2}.

Let  $f_{\la}(x,y) = x + \la y$; we wish to show that $f_{\la}(C^{2}) = [0,1+\la]$. Observe that $C + \la C = \la(C + \la^{-1}C)$ for $\la \neq 0$, so it suffices to consider $\tfrac13 \le \la \le 1$.

\begin{proof}
We apply Lemma \ref{lemma2} and show that for any two closed
intervals $I_1,I_2$ of the same length in $[a,b]$, $f_{\la}(I_1,I_2) =
f_{\la}(\ddot I_1,\ddot I_2)$. For clarity, we write $I_1 =
[r,r+3t]$, $I_2 = [s,s+3t]$, and $w = r + \la s$,  so that
$f_{\la}(I_1,I_2) = [w, w+ 3(1+\la)t]$.

Observe that $\ddot I_1 = [r,r+t]\cup[r+2t,r+3t]$
and  $\ddot I_2 = [s,s+t]\cup[s+2t,s+3t]$, so
\begin{equation}\begin{split}
f_{\la}(\ddot I_1,\ddot I_2)
&= \left(\ [w,w+(1+\la)t] \cup [w+2\la t,w+(1+3\la)t]\ \right) \\
&\cup \left(\ [w+2t,w+(3+\la)t] \cup [w+(2+2\la t,w+(3+3\la)t]\ \right) \,.
\end{split}\end{equation}
Since $\la \le 1$, $1+\la \ge 2\la$ and  $3+\la \ge 2+ 2\la$, the pairs of intervals coalesce into
\begin{equation}
f_{\la}(\ddot I_1,\ddot I_2) = [w,w+(1+3\la)t]) \cup [w+2t,w+(3+3\la)t].
\end{equation}
Since $\la \ge \frac 13$, { we have} $2 \le 1+3\la$. { Hence} $f_{\la}(\ddot I_1,\ddot I_2) =f_{\la}(I_1,I_2)$, completing the proof.
\end{proof}

\begin{remark}
Unlike the folklore proof for $\la = 1$, there seems to be no obvious
algorithmic proof, save for  $\la = \frac 13$. In this case, suppose
$u \in [0,\frac 43]$. If $u = \frac 43$,
then $u = 1 + \frac 13\cdot 1 \in C + \frac 13 C$. If $u < \frac 43$,
then we can write $u = x + y$, $x, y \in C$, and assume $x\ge
y$. Since $y \le \frac u2 <
\frac 23$ is in $C$,  $y \le \frac 13$, hence $y = \frac 13 z$, $z \in C$. This
produces the desired construction.
\end{remark}

The case of subtraction, {that is, the case of} $f_{\la}$ when $\la < 0$, is easily handled.

\begin{theorem}
If $\be = -\la <0$, then
\begin{equation}
f_{\be}(C^{2}) =  -\la + f_{\la}(C^{2}).
\end{equation}
\end{theorem}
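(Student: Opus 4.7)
The plan is to exploit the reflection symmetry $y \in C \iff 1-y \in C$ already highlighted in the introduction; this single substitution in the second coordinate converts $f_\beta$ into a shifted copy of $f_\lambda$ and yields both containments at once.

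Concretely, I would start by unpacking the statement: with $\beta = -\lambda$, the function is $f_\beta(x,y) = x - \lambda y$. Given any $(x,y) \in C^2$, set $y' = 1 - y$, which lies in $C$ by the symmetry. Then
\begin{equation*}
f_\beta(x,y) \;=\; x - \lambda y \;=\; x - \lambda(1 - y') \;=\; -\lambda + (x + \lambda y') \;=\; -\lambda + f_\lambda(x,y'),
\end{equation*}
which shows $f_\beta(C^2) \subseteq -\lambda + f_\lambda(C^2)$.

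For the reverse inclusion I would run the same computation in the opposite direction. Given $(x,y) \in C^2$ and setting $y'' = 1 - y \in C$, one checks
\begin{equation*}
-\lambda + f_\lambda(x,y) \;=\; -\lambda + x + \lambda y \;=\; x - \lambda(1 - y) \;=\; x - \lambda y'' \;=\; f_\beta(x, y''),
\end{equation*}
so $-\lambda + f_\lambda(C^2) \subseteq f_\beta(C^2)$. Combining the two containments gives the equality claimed in the theorem.

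There is essentially no obstacle here: the whole content is the symmetry $C = 1 - C$, applied coordinatewise. No compactness, dynamics, or middle-thirds machinery is required, which is why the statement is folded in as a short remark immediately after the more substantial Utz result.
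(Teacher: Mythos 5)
Your proof is correct and is exactly the paper's argument: both rest solely on the substitution $y = 1 - z$ together with the symmetry $x \in C \iff 1-x \in C$. The paper merely states the identity $x + \be y = -\la + x + \la z$ once and leaves the two inclusions implicit, since $y \mapsto 1-y$ is a bijection of $C$ onto itself.
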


\begin{proof}
If $\be < 0$, $x,y \in C$, we have
\begin{equation}
x + \be y = x + \be(1-z) = -\la  + x + \la z
\end{equation}
for $x$ and $z$ in $C$.
\end{proof}

\begin{remark}\label{rem:sums}
Arithmetic sums of Cantor sets and more general compact sets have been studied intensively. For instance, Mendes and Oliveira \cite{mo:topological} discuss the topological structure of sums of Cantor sets, while Schmeling and Shmerkin \cite{SS:dimension} characterize those nondecreasing sequences $0\le d_1 \le d_2\le d_3\le \cdots \le 1$ { that} can arise as the sequence of Hausdorff dimensions of iterated sumsets $A,A+A,A+A+A,\ldots$ for a compact subset $A$ of $\rr$. Recent work of Gorodetski and Northrup \cite{gn:sums} involves the Lebesgue measure of sumsets of Cantor sets and other compact subsets of the real line.
We refer the interested reader to these papers and the references therein for more information.
\end{remark}

\subsection{Multiplication}\label{subsec:times}

We let $f(x,y) = x^2 y$ and shall show that $f(C^2) = [0,1]$. We begin
by showing that it suffices to consider $f(\widetilde C^2)$.

\begin{lemma}
If  $f(\widetilde C^2) = [\frac 8{27},1]$, then  $f(C^2) = [0,1]$.
\end{lemma}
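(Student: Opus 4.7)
The plan is to exploit the third ``geometric-series'' description of the Cantor set, namely $C = \{0\} \cup \bigcup_{k=0}^\infty 3^{-k}\widetilde{C}$, together with the bilinear-type scaling of $f(x,y)=x^2y$.

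First, I would expand the product set using this decomposition. Since $f$ satisfies $f(\la x,\mu y) = \la^2 \mu \, f(x,y)$, we obtain
\begin{equation*}
f(C^2) \;=\; \{0\} \;\cup\; \bigcup_{k,j \geq 0} f\bigl(3^{-k}\widetilde{C},\,3^{-j}\widetilde{C}\bigr) \;=\; \{0\} \;\cup\; \bigcup_{k,j \geq 0} 3^{-(2k+j)}\, f(\widetilde{C}^2).
\end{equation*}
Assuming $f(\widetilde C^2) = [\tfrac{8}{27},1]$, and noting that every integer $n\geq 0$ is representable as $2k+j$ with $k,j\geq 0$ (take $k=0$, $j=n$), this gives
\begin{equation*}
f(C^2) \;\supseteq\; \{0\} \;\cup\; \bigcup_{n=0}^\infty 3^{-n}\bigl[\tfrac{8}{27},1\bigr] \;=\; \{0\} \;\cup\; \bigcup_{n=0}^\infty \bigl[\tfrac{8}{27}\cdot 3^{-n},\,3^{-n}\bigr].
\end{equation*}

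Next I would verify that these scaled intervals chain together to cover $(0,1]$. The interval for index $n+1$ has right endpoint $3^{-n-1}$, while the interval for index $n$ has left endpoint $\tfrac{8}{27}\cdot 3^{-n}$. The overlap condition reduces to the inequality $\tfrac{8}{27} \leq \tfrac{1}{3}$, equivalently $8 \leq 9$ — this is the one substantive inequality underlying the lemma, and it is precisely why the exponent on $x$ is $2$ rather than $1$ (the analogous product set $C\cdot C$ will not cover $[0,1]$). Since the intervals nest downward with right endpoints $3^{-n}\to 0$, their union is $(0,1]$, so $f(C^2) \supseteq [0,1]$.

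Finally, the reverse inclusion $f(C^2) \subseteq [0,1]$ is automatic from $C\subseteq [0,1]$. I do not expect any real obstacle: the entire lemma rests on the decomposition of $C$ and the single numerical check $8\leq 9$. The interesting content of Theorem~\ref{maintheorem}(1) is therefore entirely contained in the forthcoming identification $f(\widetilde C^2) = [\tfrac{8}{27},1]$, which this lemma shows is sufficient.
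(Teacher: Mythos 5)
Your proof is correct and follows essentially the same route as the paper's: both rest on the decomposition $C = \{0\} \cup \bigcup_{k\ge 0} 3^{-k}\widetilde C$ (used pointwise in the paper, as a union of scaled copies of $f(\widetilde C^2)$ in your version) and on the single inequality $\tfrac{8}{27} < \tfrac13$ that makes the scaled intervals chain together. The only difference is presentational, so there is nothing to add.
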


\begin{proof}
Suppose $u \in [0,1]$. If $u = 0$, then $u = 0^2\cdot 0$. If $u > 0$,
then there exists a
unique integer $r \ge 0$ so that $u = 3^{-r}v$, where $v \in (\frac
13,1]$. Since $\frac 8{27} < \frac 13$, $v = x^2y$ for $x,y\in
\widetilde C \subset C$, and since $x, 3^{-r}y \in C$,
$u = x^2 (3^{-r}y)$ is the desired representation.
\end{proof}

Accordingly, we confine our attention to $\widetilde C$.
\begin{lemma}
If $I = [a,a+3t]$ and $J = [b,b+3t]$ are in
$[\frac 23, 1]$, then $f(I,J) = f(\ddot I, \ddot J)$.
\end{lemma}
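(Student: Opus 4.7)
The plan is to compute both $f(I,J)$ and $f(\ddot I,\ddot J)$ explicitly as unions of closed intervals and then verify that the four pieces of the latter chain together with no gaps to cover the former. Since $f(x,y)=x^2 y$ is strictly increasing in each variable on the positive quadrant, the image of any rectangle $[x_0,x_1]\times[y_0,y_1]\subset[\tfrac23,1]^2$ under $f$ is the closed interval $[x_0^2 y_0,\ x_1^2 y_1]$. In particular $f(I,J)=[a^2 b,\ (a+3t)^2(b+3t)]$, while $f(\ddot I,\ddot J)=A\cup B\cup C\cup D$ where
\begin{align*}
A &= [a^2 b,\ (a+t)^2(b+t)], \\
B &= [a^2(b+2t),\ (a+t)^2(b+3t)], \\
C &= [(a+2t)^2 b,\ (a+3t)^2(b+t)], \\
D &= [(a+2t)^2(b+2t),\ (a+3t)^2(b+3t)].
\end{align*}

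Next, I would check that both the left and the right endpoints of $A,B,C,D$ are increasing in the order listed, using only $a,b\in[\tfrac23,1]$; the one slightly nonobvious comparison, $B_{\max}\le C_{\max}$, reduces to $2b\ge a$. Granted this ordering, the coverage $A\cup B\cup C\cup D=f(I,J)$ is equivalent to the three ``no gap'' inequalities
\begin{equation*}
(a+t)^2(b+t)\ge a^2(b+2t),\quad (a+t)^2(b+3t)\ge (a+2t)^2 b,\quad (a+3t)^2(b+t)\ge (a+2t)^2(b+2t).
\end{equation*}

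The first and third inequalities are routine: expanding and factoring out $t$ leaves a polynomial in $a,b,t$ whose nonnegativity is immediate from $a,b\ge\tfrac23$. The main obstacle is the middle inequality, since it ``crosses the gap'' in both coordinates simultaneously and is actually tight at $a=\tfrac23$, $b=1$, $t=0$. Expansion gives
\begin{equation*}
(a+t)^2(b+3t)-(a+2t)^2 b = t\bigl(a(3a-2b)+3t(2a-b)+3t^2\bigr),
\end{equation*}
which is nonnegative because $3a\ge 2\ge 2b$ and $2a\ge \tfrac43\ge b$, both forced by $a\ge\tfrac23$ and $b\le 1$. This is precisely why the previous lemma reduces the problem to $\widetilde C\subset[\tfrac23,1]$ before invoking Lemma~\ref{lemma2}.
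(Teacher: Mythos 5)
Your proof is correct and follows essentially the same route as the paper: the same decomposition of $f(\ddot I,\ddot J)$ into the four intervals $A,B,C,D$, the same three ``no gap'' inequalities, and the same factored expansions (e.g.\ your middle difference $t\bigl(a(3a-2b)+3t(2a-b)+3t^2\bigr)$ is exactly the paper's $v_2-u_3$). The only cosmetic caveat is that the sign checks use the upper bounds $a,b\le 1$ as well as the lower bound $\tfrac23$, which is of course part of the hypothesis.
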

\begin{proof}
We first define
\begin{equation}
\begin{gathered}
\
[a^2b, (a+t)^2 (b + t)]=: [u_1,v_1]; \\
[a^2(b+2t), (a+t)^2 (b + 3t)]=: [u_2,v_2]; \\
[(a+2t)^2b, (a+3t)^2 (b + t)]=: [u_3,v_3]; \\
[(a+2t)^2(b+2t), (a+3t)^2 (b +3 t)]=: [u_4,v_4].
\end{gathered}
 \end{equation}
Evidently, $f(I,J) = [u_1,v_4]$, and also, $u_1 < u_2, v_1 < v_2$, and $u_3
< u_4, v_3 < v_4$. If we can first show that $v_1 > u_2$ and $v_3 > u_4$,
then $[u_1,v_1] \cup  [u_2,v_2] = [u_1, v_2]$ and
$[u_3,v_3] \cup  [u_4,v_4] = [u_3, v_4]$. Second, since $u_1 < u_3$ and $v_2 <
v_4$, if we can show that $v_2 > u_3$, then $[u_1,v_2] \cup  [u_3,v_4]
= [u_1, v_4]$, and the proof will be complete.

We compute
\begin{equation}\label{eq:36}
\begin{gathered}
v_1 - u_2 = a(2b-a)t + (2a + b) t^2 + t^3; \\
v_2 - u_3 = a(3a - 2b)t + (6a - 3b)t^2 + 3t^3; \\
v_3 - u_4  = a(2b - a)t + (5b - 2a) t^2 + t^3.
\end{gathered}
\end{equation}
Since $2b - a \ge 2\cdot \frac 23 - 1 > 0$, $3a-2b \ge 3\cdot \frac
23 - 2 \ge 0$,   $6a - 3b \ge 6 \cdot \frac 23 -3 > 0$, and $5b-2a \ge
5\cdot \frac 23 - 2 > 0$, each of the quantities in \eqref{eq:36} is positive and
the proof is complete.
\end{proof}
\begin{theorem}\label{mult}
\begin{equation}
f(\widetilde C^2) = [\tfrac 8{27},1].
\end{equation}
\end{theorem}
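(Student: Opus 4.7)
My plan is to invoke Lemma \ref{lemma2} with $m=2$, $F = f$, and $[a,b] = [\tfrac23,1]$. The preceding lemma has already verified precisely the hypothesis required: for every pair of closed subintervals $I, J \subset [\tfrac23, 1]$ of common length (disjoint or identical), $f(I,J) = f(\ddot I, \ddot J)$. The middle-thirds construction starting from $[\tfrac23, 1]$ produces $C_{2/3,\,1} = \tfrac23 + \tfrac13 C$, which by the third description of the Cantor set given in Section \ref{sec:tools} is exactly $\widetilde C$. Consequently Lemma \ref{lemma2} yields $f(\widetilde C^2) = f([\tfrac23, 1]^2)$.

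It then remains only to compute the image $f([\tfrac23, 1]^2)$. Since $[\tfrac23, 1]^2$ is compact and connected and $f$ is continuous, the image is a closed bounded interval. The function $f(x,y) = x^2y$ is strictly increasing in each variable on the positive reals, so the minimum on $[\tfrac23,1]^2$ is attained at $(\tfrac23,\tfrac23)$ with value $(\tfrac23)^3 = \tfrac{8}{27}$, and the maximum is attained at $(1,1)$ with value $1$. Hence $f(\widetilde C^2) = [\tfrac{8}{27}, 1]$, as claimed. There is no real obstacle at this stage — all of the substantive work has been done in the two preceding lemmas, namely the reduction from $C$ to $\widetilde C$ and the middle-thirds identity on $[\tfrac23,1]$ (the latter relying on the positivity of the three expressions in \eqref{eq:36}, which is where the restriction to $\widetilde C$ rather than $C$ is genuinely used).
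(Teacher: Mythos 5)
Your proposal is correct and follows exactly the paper's argument: apply Lemma \ref{lemma2} on $[\tfrac23,1]$ using the preceding lemma's verification of the middle-thirds hypothesis, then compute $f([\tfrac23,1]^2)=[\tfrac{8}{27},1]$. You simply spell out in more detail the identification of $C_{2/3,1}$ with $\widetilde C$ and the endpoint computation, which the paper leaves implicit.
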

\begin{proof}
Apply Lemma \ref{lemma2}, noting that $f([\frac 23,1]^2) =
[\tfrac 8{27},1]$.
\end{proof}

\begin{remark}
Observe that $u \in [0,1]$, written as $x^2y$, where $x,y \in C$, is also
$x\cdot  x \cdot y$, so this implies that every element in
$[0,1]$ is a product of three elements from the Cantor set. (This can
also be proved in a more ungainly way, by taking $m=3$ in Lemma
\ref{lemma2}  with $f(x_1,x_2,x_3) = x_1x_2x_3$.)
\end{remark}

\begin{remark}
We do not know an algorithm for expressing $u \in [0,1]$ in the form
of $x^2y$ or $x_1x_2x_3$ as a product of elements of the Cantor set.
\end{remark}

\begin{remark}
More generally, one can look at $f_{a,b}(C^2)$ where $f_{a,b}(x,y) :=
x^ay^b$. Since $u = x^ay^b$ if and only if $u^{1/a} = x y^{b/a}$, for $u \in
(0,1)$, it suffices to consider $a=1$.  By looking at $C_1 =
[0,\frac 13] \cup [\frac23, 1]$, it is not hard to see that if $f(x,y)
= x y^t$, $t \ge 1$, and $(\frac 23)^{1+t} > \frac 13$, then
$f(C_1^2)$ is already missing an interval from $[0,1]$. This condition
occurs when $t < \frac{\log 2}{\log 3/2} \approx 1.7095$.
\end{remark}

\begin{remark}
By taking logarithms, we can convert the question about products of
elements of $C$ into a question about sums. (We can omit the point $0$
since its multiplicative behavior is trivial.) Of course, the
underlying set is no longer the standard self-similar Cantor set but
is a more general (``non-linear'') closed subset of $\rr$. Some
conclusions about the number of factors needed to recover all of
$[0,1]$ can be obtained from the general results in the papers of
Cabrelli--Hare--Molter \cite{chm:sums}, \cite{chm:sums2}, but it does
not appear that one obtains the precise conclusion of part 2 of Theorem \ref{maintheorem} in this fashion.
\end{remark}

\subsection{Division}\label{subsec:divide}

In this section, we complete our arithmetic discussion by considering quotients.

\begin{theorem}\label{div}
\begin{equation}\label{E:div}
\left\{ \frac uv: u, v \in C\right\} = \bigcup_{m = -\infty}^{\infty}
\left[\frac 23 \cdot 3^m ,  \frac 32 \cdot 3^m\right].
\end{equation}
\end{theorem}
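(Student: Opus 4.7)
The plan is to reduce Theorem \ref{div} to the claim $F(\widetilde C^2) = [\tfrac23, \tfrac32]$, where $F(x,y) = x/y$, and then apply the self-similar decomposition of $C$.

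First I would use the structural identity $C = \{0\} \cup \bigcup_{k \geq 0} 3^{-k}\widetilde C$ to reduce to quotients of $\widetilde C$. Indeed, any nonzero $x, y \in C$ can be written as $x = 3^{-j}\tilde x$, $y = 3^{-k}\tilde y$ with $\tilde x, \tilde y \in \widetilde C$ and $j, k \geq 0$, so $x/y = 3^{k-j}(\tilde x/\tilde y)$. Thus the quotient set is exactly $\bigcup_{m \in \zz} 3^m (\widetilde C/\widetilde C)$, and conversely, given $q = 3^m r$ with $r = \tilde x/\tilde y \in \widetilde C/\widetilde C$, we can absorb the factor $3^m$ into $\tilde x$ (if $m \leq 0$) or into $\tilde y$ (if $m \geq 0$) to exhibit $q$ as a quotient of two elements of $C$. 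Consequently everything comes down to the equality $\widetilde C / \widetilde C = [\tfrac23, \tfrac32]$.

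Next I would apply Lemma \ref{lemma2} on the interval $[\tfrac23, 1]$, noting that $F([\tfrac23,1]^2) = [\tfrac23, \tfrac32]$. Given subintervals $I_1 = [a, a+3t]$ and $I_2 = [b, b+3t]$ of common length in $[\tfrac23, 1]$, the image $F(\ddot I_1, \ddot I_2)$ decomposes as a union of four intervals
\[
B_{ij} = \bigl[\text{left of } J_i / \text{right of } J_j,\ \text{right of } J_i / \text{left of } J_j\bigr],\qquad (i,j) \in \{1,2\}^2,
\]
where $J_1, J_2$ denote the two children of the relevant parent. The target $F(I_1, I_2) = [a/(b+3t),\ (a+3t)/b]$ is already the smallest-left-endpoint to largest-right-endpoint range, so the task is to order the four $B_{ij}$ by left endpoint and verify that consecutive ones overlap.

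The main obstacle, analogous to the computation following \eqref{eq:36} in the proof of Theorem \ref{mult}, will be the careful case analysis needed to verify these overlap inequalities, because the sorted order of the four $B_{ij}$ depends on whether $b \geq a$ or $b < a$. Each overlap reduces to a polynomial inequality in $a, b, t$; after clearing denominators and cancelling, these take the form $a - b \geq t$, $b - a \geq t$, $3a + 3t \geq b$, or $3b + 3t \geq a$, each of which holds under the constraints that $a, b \in [\tfrac23, 1-3t]$ and that $I_1, I_2$ are either identical or disjoint (forcing $|a - b| \geq 3t$ in the disjoint case). Once these are checked, Lemma \ref{lemma2} yields $F(\widetilde C^2) = [\tfrac23, \tfrac32]$, and combining with the first paragraph completes the proof of \eqref{E:div}.
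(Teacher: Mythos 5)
Your strategy is the same as the paper's: use $C = \{0\} \cup \bigcup_{k\ge 0} 3^{-k}\widetilde C$ to reduce everything to the identity $\widetilde C/\widetilde C = [\tfrac23,\tfrac32]$, and then verify the hypothesis of Lemma \ref{lemma2} for $F(x,y)=x/y$ on $[\tfrac23,1]$. The reduction step is fine. (One small economy you miss: since $u/v$ and $v/u$ are simultaneously quotients, the paper invokes the symmetry $x\mapsto 1/x$ to assume $a\le b$ once and for all, rather than running two symmetric cases $b\ge a$ and $b<a$.)

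There is, however, a genuine gap in the overlap verification. You assert that each overlap inequality reduces to one of $a-b\ge t$, $b-a\ge t$, $3a+3t\ge b$, $3b+3t\ge a$, ``each of which holds'' when $I_1,I_2$ are identical or disjoint. In the identical case $a=b$ the inequalities $|a-b|\ge t$ are false, and this is not a cosmetic issue: the four quotient intervals genuinely fail to sit in the generic order there. Concretely, with $a\le b$ the four intervals are $J_1=[\tfrac{a}{b+3t},\tfrac{a+t}{b+2t}]$, $J_2=[\tfrac{a}{b+t},\tfrac{a+t}{b}]$, $J_3=[\tfrac{a+2t}{b+3t},\tfrac{a+3t}{b+2t}]$, $J_4=[\tfrac{a+2t}{b+t},\tfrac{a+3t}{b}]$, and the difference of right endpoints of $J_3$ and $J_2$ is $\tfrac{2t(b-a-t)}{b(b+2t)}$, which is negative when $a=b$; likewise the overlap of $J_3$ with $J_4$ requires $b-a>t$. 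So ``sort by left endpoint and check that consecutive intervals overlap'' breaks down for identical intervals. The repair is short but must be stated: when $a=b$ one checks $J_3=[\tfrac{a+2t}{a+3t},\tfrac{a+3t}{a+2t}]\subset[\tfrac{a}{a+t},\tfrac{a+t}{a}]=J_2$, discards $J_3$, and then covers $J_0$ by $J_1\cup J_2\cup J_4$ using $\tfrac{a+t}{b+2t}-\tfrac{a}{b+t}=\tfrac{t(b-a+t)}{(b+t)(b+2t)}>0$ and $\tfrac{a+t}{a}-\tfrac{a+2t}{a+t}=\tfrac{t^2}{a(a+t)}>0$. With that case supplied (and the disjoint case handled by $|a-b|\ge 3t$ together with $3a+3t\ge b$, exactly as you indicate), your argument coincides with the paper's proof.
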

\begin{proof}

As with multiplication, it suffices to consider $\widetilde C$.

\begin{lemma}
Theorem \ref{div} is implied by the identity
\begin{equation}\label{E:div-c-tilde}
\left\{ \frac uv: u, v \in \widetilde C\right\} = \left[\frac 23 ,
  \frac 32 \right]\,.
\end{equation}
\end{lemma}

\begin{proof}
Write $u, v \in C$ as $u = 3^{-s}\tilde u$, $v =  3^{-t}\tilde v$ for
integers $c,d \ge 0$ and $\tilde u, \tilde v \in \widetilde C$. Then
$u/v = 3^{d-c}\tilde u/\tilde v$, where $m=d-c$ can attain any integer value.
\end{proof}

We now prove \eqref{E:div-c-tilde}. Consider $\widetilde C_1 = [\frac 23,1]$ and apply Lemma
\ref{lemma2}. Clearly, $\{\frac uv: u, v \in \widetilde C_1\} = [\frac 23, \frac 32]$.
Consider two intervals in $\widetilde C_n$,  $I_1 = [a,a+3t]$ and
$I_2=[b,b+3t]$. { These intervals are either identical or disjoint.} Since $x = \frac uv$ implies $\frac 1x = \frac vu$, there is no harm in assuming $a \le
b$, and either $I_1 = I_2$ and $a=b$, or the intervals are disjoint and $a + 3t
\le b$. The quotients  from these intervals will lie in
\[
J_0:= \left[ \frac a{b+3t}, \frac {a+3t}b \right]:= [r_0,s_0].
\]
Since $\ddot I_1 = [a,a+t] \cup [a+2t,a+3t]$
and  $\ddot I_2 = [b,b+t] \cup [b+2t,b+3t]$, we obtain four subintervals
\[
\begin{gathered}
J_1 =\left[ \frac a{b+3t}, \frac {a+t}{b+2t} \right] = [r_1,s_1], \\
J_2 =\left[ \frac a{b+t}, \frac {a+t}{b} \right] = [r_2,s_2], \\
J_3 =\left[ \frac {a+2t}{b+3t}, \frac {a+3t}{b+2t} \right] = [r_3,s_3], \\
J_4 =\left[ \frac {a+2t}{b+t}, \frac {a+3t}{b} \right] = [r_4,s_4]. \\
\end{gathered}
\]
We need to see how $J_0 = J_1 \cup J_2 \cup J_3 \cup J_4$. There are
two cases, depending on whether $a=b$ or $a<b$.

We first record some algebraic relations. We have
$r_1 = r_0$ and $s_4 = s_0$, and, evidently,
$r_1 < r_2$, $s_1 < s_2$, $r_3 < r_4$, $s_3 < s_4$. Further,
\[
\begin{gathered}
r_3 - r_2 =  \frac {a+2t}{b+3t} -  \frac a{b+t} =
\frac{2t(b-a+t)}{(b+t)(b+3t)}, \\
s_3 - s_2 = \frac {a+3t}{b+2t} -  \frac {a+t}{b} =
\frac{2t(b-a-t)}{b(b+2t)}, \\
s_1 - r_2 =  \frac {a+t}{b+2t} -  \frac a{b+t} =
\frac{t(b-a+t)}{(b+t)(b+2t)}, \\
s_2 - r_3 = \frac {a+t}{b} -  \frac {a+2t}{b+3t} = \frac{t(3a + 3t -
  b)}{b(b+3t)} \ge \frac {t(3\cdot\frac23 + 0 - 1)}{b(b+3t)} > 0, \\
s_3 - r_4 =  \frac {a+3t}{b+2t} - \frac {a+2t}{b+t} =
\frac{t(b-a-t)}{(b+t)(b+2t)}.
\end{gathered}
\]

Suppose first that $a < b$, so $ a+3t < b$. Then each of the
differences above is positive, so $r_1 < r_2 < r_3 < r_4$ and
$s_1 < s_2 < s_3 < s_4$; further, the intervals overlap: $s_1 > r_2$,
$s_2 > r_3$ and $s_3 > r_4$. Thus   $J_0 = J_1 \cup J_2 \cup J_3 \cup
J_4$.

If $a = b$, then $J_3 =\left[ \frac {a+2t}{a+3t}, \frac {a+3t}{a+2t}
\right] \subset \left[ \frac a{a+t}, \frac {a+t}{a} \right] = J_2$, so
we may drop $J_3$ from consideration. We have $r_1 < r_2 < r_4$ and  $s_1
< s_2 < s_4$  and need only show that $s_1 > r_2$ and $s_2 > r_4$. The
first is clear, and for the second,
\begin{equation}
s_2 - r_4 =  \frac {a+t}{a} -  \frac {a+2t}{a+t} = \frac{t^2}{a(a+t)}
> 0,
\end{equation}
so { $J_0 =  J_1 \cup J_2 \cup J_4$,} and we are done.
\end{proof}

\begin{remark}
We do not know an algorithm for expressing a feasible $u$ as a
quotient of elements { in} $C$.
\end{remark}

\subsection{Multiplication, revisited}\label{subsec:times-again}

Let $g(x,y) = xy$. As noted earlier, $g(C^2)$ is not the full interval
$[0,1]$, though $g(C^2) = \cap g(C_i^2)$ is the intersection of a
descending chain of closed sets and so is closed. In order to gain
some information about $g(C^2)$, we look carefully at how Lemma
\ref{lemma2} fails.

\begin{lemma}\label{middle}
Let $I = [a,a+3t]$ and $J = [b,b+3t]$, with { $\tfrac23 \le a \le b \le 1$,} be either
identical or disjoint intervals. Then
\begin{equation*}
\begin{gathered}
a < b \implies g(\ddot I, \ddot J) = g(I, J)\,; \\
a = b \implies g(\ddot I, \ddot I) = g(I,I) \setminus
((a+2t)^2-t^2,(a+2t)^2).
\end{gathered}
\end{equation*}
\end{lemma}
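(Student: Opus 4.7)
The plan is to mimic the proof of the earlier multiplication lemma (Theorem \ref{mult}), i.e.\ to decompose $g(\ddot I,\ddot J)$ into the four ``corner'' subintervals
\begin{equation*}
[u_1,v_1] = [ab,(a+t)(b+t)],\quad [u_2,v_2] = [a(b+2t),(a+t)(b+3t)],
\end{equation*}
\begin{equation*}
[u_3,v_3] = [(a+2t)b,(a+3t)(b+t)],\quad [u_4,v_4] = [(a+2t)(b+2t),(a+3t)(b+3t)],
\end{equation*}
obtained from the four products of pieces of $\ddot I$ and $\ddot J$. Monotonicity of $g$ in each argument (under the standing assumption that $a,b\ge \tfrac23 > 0$) immediately gives $u_1 \le u_2 \le u_3 \le u_4$ and $v_1\le v_2\le v_3\le v_4$, together with $g(I,J)=[u_1,v_4]$. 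So the question reduces to whether the three consecutive gaps $v_1-u_2$, $v_2-u_3$, $v_3-u_4$ are nonnegative; the number and location of missing intervals in $g(\ddot I,\ddot J)$ is determined by the signs of these three quantities.

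A direct expansion gives
\begin{equation*}
v_1 - u_2 = (b-a)t + t^2,\qquad v_2 - u_3 = (3a-b)t + 3t^2,\qquad v_3 - u_4 = (b-a)t - t^2.
\end{equation*}
The first is always nonnegative since $a\le b$. The second is strictly positive because $a\ge\tfrac23$ and $b\le 1$ force $3a-b\ge 1>0$. So the only quantity whose sign is in doubt is $v_3-u_4 = t(b-a-t)$.

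Now I split into the two cases. If $a<b$, then $I$ and $J$ are disjoint, hence $a+3t \le b$, so $b-a-t \ge 2t>0$ and all three gaps are positive. The four intervals therefore coalesce into $[u_1,v_4]=g(I,J)$, as required. If $a=b$, then $v_3-u_4=-t^2<0$, while $v_1-u_2 = t^2>0$ and $v_2-u_3 = 2at+3t^2>0$ still hold; thus $[u_1,v_1]\cup[u_2,v_2]\cup[u_3,v_3]=[u_1,v_3]$ fuses into a single interval but is disjoint from $[u_4,v_4]$. The missing interval is therefore $(v_3,u_4)$, and a brief computation gives
\begin{equation*}
v_3 = (a+3t)(a+t) = (a+2t)^2 - t^2,\qquad u_4 = (a+2t)^2,
\end{equation*}
which is exactly the claimed gap $\bigl((a+2t)^2-t^2,\,(a+2t)^2\bigr)$.

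The main obstacle is really just bookkeeping: making sure no gap other than the one between $[u_1,v_3]$ and $[u_4,v_4]$ sneaks in when $a=b$, and verifying that the lower bound $a\ge\tfrac23$ is genuinely used (it is what rules out the a priori possibility that $v_2<u_3$). Everything else is algebraic manipulation of the four corner products.
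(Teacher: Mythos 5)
Your proof is correct and follows essentially the same route as the paper's: both decompose $g(\ddot I,\ddot J)$ into the four corner product intervals and reduce the lemma to the signs of the three gaps $v_1-u_2=(b-a)t+t^2$, $v_2-u_3=(3a-b)t+3t^2$, and $v_3-u_4=t(b-a-t)$, using $a\ge\tfrac23$, $b\le1$ for the middle gap and disjointness ($a+3t\le b$) versus $a=b$ to settle the last one. The identification of the missing interval as $\bigl((a+2t)^2-t^2,(a+2t)^2\bigr)$ when $a=b$ matches the paper exactly.
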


{
\begin{proof}
We have
$$
g([a,a+3t],[b,b+3t]) = [ab, ab + 3(a+b)t + 9t^2]
$$
and
\begin{equation}\label{eq:gab}
\begin{gathered}
g([a,a+t]\cup[a+2t,a+3t],[b,b+t]\cup[b+2t,b+3t]) = \\
[ab, ab + (a + b)t + t^2] \cup [ab + 2at, ab + (3a + b)t + 3t^2] \\
\cup [ab + 2b t, ab + (a + 3b)t + 3t^2] \\ \cup[ ab + (2a+2b)t +
4t^2, ab + 3(a+b)t + 9t^2].
\end{gathered}
\end{equation}
Since $a \le b$,  it follows that $ab + 2at \le  ab + (a + b)t + t^2$,
and the first two intervals coalesce into $[ab, ab + (3a + b)t + 3t^2]$.

Suppose that $a < b$, and recall that we have assumed $\tfrac23 \le a < b \le 1$.
Since $a+t \le b$, it follows that $ab+(2a+2b)t+4t^2 \le ab + (a+3b)t +3t^2$
and the last two intervals coalesce into $[ab+2bt,ab+3(a+b)t+9t^2]$. Thus the right-hand side of \eqref{eq:gab} reduces to
\begin{equation}\label{eq:gab2}
[ab, ab + (3a + b)t + 3t^2] \cup [ ab + 2bt, ab + 3(a+b)t + 9t^2]
\end{equation}
Moreover,
\begin{equation*}
ab+ (3a+b)t + 3t^2 - (ab + 2bt) = t(3a+3t - b) \ge t (3 \cdot \tfrac23 + 0 - 1) = t \ge 0,
\end{equation*}
which shows that the pair of intervals in \eqref{eq:gab2} coalesces to a single interval. This proves the first statement.

If $a = b$, then the middle two intervals in \eqref{eq:gab} are the same, and
\begin{equation*}\begin{split}
&g(([a,a+t]\cup[a+2t,a+3t])^2) \\
&\quad = [a^2, a^2+4at + 3t^2] \cup [a^2 + 4at + 4t^2, a^2 + 6at + 9t^2] \\
&\quad = [a^2,(a+3t)^2] \setminus ((a+2t)^2-t^2,(a+2t)^2)).
\end{split}\end{equation*}
\end{proof}
}

This leads to the following estimate for the Lebesgue measure of  $g(C^2)$.
\begin{theorem}\label{meas}
\[
\mu(g(C^2)) \ge \frac {17}{21}.
\]
\end{theorem}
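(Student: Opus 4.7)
The plan is to reduce the problem to a measure estimate on $\widetilde C$, and then to track the Lebesgue measure lost as one passes from $g(\widetilde C_1^2)$ down to $g(\widetilde C^2) = \bigcap_n g(\widetilde C_n^2)$ using Lemma \ref{middle} at each refinement. First, using the decomposition $C = \{0\}\cup\bigcup_{k\ge 0} 3^{-k}\widetilde C$, I would write $g(C^2) = \{0\}\cup\bigcup_{m\ge 0} 3^{-m}g(\widetilde C^2)$. Because $g(\widetilde C^2)\subseteq [\tfrac49,1]$ and $3^{-m-1} < \tfrac49\cdot 3^{-m}$, the scaled copies $3^{-m}g(\widetilde C^2)$ are pairwise disjoint, so
\begin{equation}
\mu(g(C^2)) = \sum_{m=0}^{\infty} 3^{-m}\mu(g(\widetilde C^2)) = \tfrac32\,\mu(g(\widetilde C^2)).
\end{equation}
Thus it suffices to prove $\mu(g(\widetilde C^2)) \ge \tfrac{34}{63}$.

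Next, I would start from $\mu(g(\widetilde C_1^2)) = \mu([\tfrac49,1]) = \tfrac59$ and bound the measure lost at each step $\widetilde C_n \to \widetilde C_{n+1}$. By Lemma \ref{middle}, for off-diagonal pairs of intervals $I,J$ in $\widetilde C_n$ one has $g(\ddot I,\ddot J) = g(I,J)$, so nothing is lost from off-diagonal contributions. Each of the $2^{n-1}$ diagonal pairs, with $I = [a,a+3t]$ and $t = 3^{-n-1}$, can remove at most one open interval of length $t^2 = 3^{-2n-2}$. The total potential loss is therefore bounded by the geometric series
\begin{equation}
\sum_{n=1}^{\infty} 2^{n-1}\cdot 3^{-2n-2} = \tfrac{1}{81}\sum_{n=1}^{\infty}\bigl(\tfrac29\bigr)^{n-1} = \tfrac{1}{81}\cdot\tfrac{9}{7} = \tfrac{1}{63},
\end{equation}
so $\mu(g(\widetilde C^2)) \ge \tfrac59 - \tfrac{1}{63} = \tfrac{34}{63}$, and multiplying by $\tfrac32$ yields the desired bound.

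The main thing to be careful about is the justification that one may simply sum the potential single-level losses to bound the total measure removed. This is valid for a \emph{lower} bound on $\mu(g(\widetilde C^2))$: one may pessimistically assume every candidate gap is excised without ever being filled in by off-diagonal products at a later stage. The obstacle to promoting $\ge$ to a strict inequality would be to show that at least one candidate gap really is covered by some later off-diagonal product, but that refinement is not needed for the stated result. The remaining details are arithmetic: verifying $g([\tfrac23,1]^2) = [\tfrac49,1]$ and the disjointness inequality $3^{-m-1} < \tfrac49\cdot 3^{-m}$, both of which are immediate.
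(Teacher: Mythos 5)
Your proposal is correct and follows essentially the same route as the paper: the same decomposition $g(C^2)=\{0\}\cup\bigcup_k 3^{-k}g(\widetilde C^2)$ giving the factor $\tfrac32$, and the same use of Lemma \ref{middle} to bound the loss at stage $n$ by $2^{n-1}\cdot 3^{-2n-2}$, summing to $\tfrac1{63}$. The arithmetic ($\tfrac59-\tfrac1{63}=\tfrac{34}{63}$, then $\tfrac32\cdot\tfrac{34}{63}=\tfrac{17}{21}$) matches the paper exactly.
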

\begin{proof}
First note that $g(\widetilde C^2) \subset
g(\widetilde C_1^2) = [\frac 49,1]$.
It follows as before
\begin{equation}
g(C^2) = \{0\} \cup \bigcup_{k=0}^\infty 3^{-k}g(\widetilde C^2),
\end{equation}
and since $\frac 13 < \frac 49$, the sets $3^{-k}g(\widetilde C^2)$
are disjoint. Therefore,
\begin{equation}
\mu(g(C^2)) = \sum_{k=0}^\infty 3^{-k}\mu(g(\widetilde C^2)) =
\tfrac 32 \mu(g(\widetilde C^2)).
\end{equation}
Since $\widetilde C_n$ consists of $2^{n-1}$ intervals of length
$3^{-n}$, it follows from Lemma \ref{middle} that
\begin{equation}\begin{split}
&\mu(g(\widetilde C_{n+1}^2)) \ge \mu(g(\widetilde C_{n}^2)) -
\frac {2^{n-1}}{3^{2n+2}} \\
&\implies \mu(g(\widetilde C^2)) \ge \left(1-\frac 49\right) - \sum_{n=1}^{\infty} \frac {2^{n-1}}{3^{2n+2}} =
\frac{34}{63} \\
&\implies \mu(g(C^2)) \ge \frac 32\cdot \frac{34}{63} = \frac {17}{21}.
\end{split}\end{equation}
\end{proof}

\begin{remark}
This argument shows that for all $m$,
\begin{equation}
\begin{gathered}
\mu(g(\widetilde C_m^2)) \ge \mu(g(\widetilde C^2))
\ge\mu(g(\widetilde C_m^2)) -  \sum_{n=m+1}^{\infty} \frac {2^{n-1}}{3^{2n+2}}.
\end{gathered}
\end{equation}
\end{remark}

The reason that Theorem \ref{meas} is only an estimate is that there is no
guarantee that intervals missing from $f(\ddot I^2)$ cannot be covered
elsewhere. The first instance in which this occurs is for $n=4$: one
of the intervals in $\widetilde C_4$ is $I_0 = [\frac {62}{81},\frac
{63}{81}]$ = $[.2022_3,.21_3]$. By Lemma \ref{middle},
\begin{equation}
(\tfrac{188^2-1}{243^2}, \tfrac{188^2}{243^2})  =
 (\tfrac{35343}{59049},\tfrac{35344}{59049})
 \approx (.5985368,.5985537) \notin f(\ddot I_0^2).
\end{equation}
However, $\widetilde C_5$ contains the intervals
\begin{equation}
 J_1 = [\tfrac {162}{243}, \tfrac {163}{243}] = [.2_3,.20001_3], \quad
 J_2 = [\tfrac  {216}{243}, \tfrac  {217}{243}] = [.22_3,.22001_3]
\end{equation}
and
 \[
 J_1J_2  = [\tfrac{34992}{59049},\tfrac{35371}{59049}] \approx
 (.5925926,.59901099)
 \]
covers the otherwise-missing interval.
A more detailed {\it Mathematica} computation, using $m=11$, gives the first
eight decimal digits for $\mu(g(C^2))$:
\begin{equation}
\mu(g(C^2)) = .80955358\dots \approx \frac{17}{21} + 2.97 \times 10^{-5}.
\end{equation}

\section{Final remarks.}\label{sec:final-remarks}

As mentioned in the introduction, this paper is part of a larger project. { We discuss a few results from this project whose proofs will appear elsewhere, written by various combinations of the authors and their students.}

\subsection{Self-similar Cantor sets}

The Cantor set easily generalizes to sets defined with different
``middle-fractions'' removed.
Consider  the self-similar Cantor set $D^{(t)}$ obtained as the
invariant set for the pair of contractive mappings
$$
f_1(x) = tx, \qquad f_2(x) = tx+(1-t)
$$
acting on the real line. Thus
$$
D^{(t)} = \bigcap_{n\ge 0} D_n^{(t)},
$$
where, for each $n\ge 0$, $D_n^{(t)}$ is a union of $2^n$ intervals,
each of length $t^n$, contained in $[0,1]$.
For instance,
$$
D_1^{(t)} = [0,t] \cup [1-t,1],
$$
$$
D_2^{(t)} = [0,t^2]\cup[t(1-t),t]\cup[1-t,1-t+t^2]\cup[1-t^2,1]\,.
$$
For an integer $m\ge 2$, let $t_m$ be the unique solution to
$(1-t)^m=t$ in $[0,1]$. Then
$$
\cdots < t_4 < t_3 < t_2 < \frac12
$$
and $t_m \to 0$ as $m\to\infty$. Numerical values are
\begin{eqnarray*}
&t_2 = \tfrac{3-\sqrt5}2 \approx 0.381966 \dots \\
&t_3 \approx 0.317672 \dots \\
&t_4 \approx 0.275508 \dots \\
&t_5 \approx 0.245122 \dots \\
&t_6 \approx 0.22191 \dots \\
&t_7 \approx 0.203456 \dots
\end{eqnarray*}
If we choose $t$ such that $t_m \le t < t_{m-1}$ (that is,  $(1-t)^m \le t < (1-t)^{m-1}$) and let $g_n(x_1,\dots,x_n) = x_1x_2\cdots x_n$, then
\[
g_m((D^{(t)})^m) = [0,1],
\]
but $g_{m-1}((D^{(t)})^{m-1})$ has Lebesgue measure strictly less than one.
In particular, if a Cantor set $D$ in which a middle fraction $\la$ is
taken with $\la \le 1 - 2t_2 = \sqrt 5 - 2 \approx .23607$, then every
element in $[0,1]$ can be written as a product of two elements of $D$.

\subsection{Number Theory}
{ There is much more to be said about the representation of specific numbers in $C/C$. For example, if  $v = 2u$ for $u,v \in C$, then \begin{equation*} u = \frac 1{3^n},\ v  = \frac 2{3^n},\quad n \ge 1; \end{equation*} if $v = 11u$ for $u,v \in C$, then \begin{equation*} u = \frac 1{4\cdot 3^n},\ v = \frac {11}{4\cdot 3^n},\quad n \ge 1.
\end{equation*}
By contrast, if $v = 4u$ for $u,v \in C$, then there exists a finite or infinite sequence of integers $(n_k)$ with $n_1 \ge 2$ and  $n_{k+1}-n_k \ge 2$, so that \begin{equation*} u = \sum_k \frac 2{3^{n_k}},\qquad v = \sum_k\left(\frac 2{3^{n_k-1}}
   + \frac 2{3^{n_k}}\right).
\end{equation*}
The proof of the second result is trickier than the other two.}

%It is natural to look for solutions to $m = \frac vu$ with $u,v
%\in C$ for a given $1 < m \in \mathbb Z$ that is in the right
%interval. Because $v > u$ and $u, v \in C$ implies $3^{-k}u, 3^{-k}v
%\in C$, it seems sensible to assume that $v \in \widetilde C$.
%\begin{theorem}
%\
%\begin{enumerate}
%\item  If $v = 2u$ for $u,v \in C$, with $v \in \widetilde C$, then $v =
%\frac 23$.
%\item There are uncountably many solutions to $v = 4u$ for
%u \in C$, with $v \in \widetilde C$, parameterized as
%follows:  there exists a finite or infinite sequence of integers
%$(n_k)$ with $n_1 \ge 2$ and  $n_{k+1}-n_k \ge 2$, so that
%\begin{equation*}
%u = \sum_k \frac 2{3^{n_k}},\qquad v = \sum_k\left(\frac 2{3^{n_k-1}}
%  + \frac 2{3^{n_k}}\right).
%\end{equation*}
%\item  If $v = 11u$ for $u,v \in C$, with $v \in \widetilde C$, then $v =
%\frac 1{12}$.
%\end{enumerate}
%\end{theorem}
%The proof of the last of the three results is trickier than the first two.

We also mention a conjecture for which there is strong numerical evidence.

\begin{conjecture}
Every $u \in [0,1]$ can be written as $x_1^2 + x_2^2 + x_3^2 + x_4^2$,
$x_i \in C$.
\end{conjecture}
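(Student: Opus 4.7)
The natural strategy is to mimic the proof of Theorem~\ref{mult}, applying Lemma~\ref{lemma2} with $m=4$ to $F(x_1,x_2,x_3,x_4)=x_1^2+x_2^2+x_3^2+x_4^2$ restricted to $\widetilde C^4$. Let $I_j=[a_j,a_j+3t]\subseteq[2/3,1]$ be four intervals of common length, pairwise identical or disjoint. After relabeling so that $a_{j_0}\le a_{j_1}\le a_{j_2}\le a_{j_3}$, the image $F(\ddot I_1,\ldots,\ddot I_4)$ decomposes as a union of $16$ subintervals, one for each choice of ``lower third'' or ``upper third'' in each coordinate. I would chain five of these, namely those obtained by successively flipping the $j_k$-th coordinate from lower to upper for $k=0,1,2,3$, and verify that consecutive intervals in the chain overlap. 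A direct computation shows that the $k$-th overlap condition reduces to $\sum_j a_j - 2a_{j_k}+2tk\ge 0$, which follows from $a_j\in[2/3,1-3t]$ via a short case check in $k=0,1,2,3$. Lemma~\ref{lemma2} then yields $F(\widetilde C^4)=F([2/3,1]^4)=[16/9,4]$.

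Since $3^{-r}\widetilde C\subseteq C$ for every integer $r\ge 0$, scaling gives $\{0\}\cup\bigcup_{r\ge 0}9^{-r}[16/9,4]\subseteq F(C^4)$. Inside $[0,1]$, this is $\{0\}\cup\bigcup_{r\ge 1}[16\cdot 9^{-r-1},\,4\cdot 9^{-r}]$, which still leaves gaps $(4\cdot 9^{-r-1},16\cdot 9^{-r-1})$ at every scale---most conspicuously the interval $(4/9,1)$. To fill these I would allow the four squared Cantor elements to live at distinct scales $r_1,\ldots,r_4$, exploiting the auxiliary identity $\{y_1^2+y_2^2+y_3^2:y_i\in\widetilde C\}=[4/3,3]$ (obtained by the same Lemma~\ref{lemma2} argument, now with $m=3$ and the analogous condition $\sum_ja_j-2a_{j_k}+2tk\ge 0$ for $k=0,1,2$): writing $u=x_1^2+w$ with $x_1\in C$, one searches for the remainder $w$ inside a suitable scaled copy of $[4/3,3]$, choosing $x_1$ so that the required remainder actually lands in the admissible multi-scale set.

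The hard part is exactly this gap-filling step. Unlike the settings of Sections~\ref{subsec:times} and~\ref{subsec:divide}, the relevant Minkowski-type sum $\{x^2:x\in \widetilde C\}+\{y_1^2+y_2^2+y_3^2:y_i\in C\}$ is not directly accessible via Lemma~\ref{lemma2}: its summands live at different scales that interact non-trivially, and the image set $\{x^2:x\in\widetilde C\}$ is itself a Cantor-like subset of $[4/9,1]$ rather than a full interval. A proof will, I expect, require a delicate combinatorial analysis of the ternary expansion of $u$, recursively pairing blocks of its digits with Cantor elements at appropriately chosen scales. This multi-scale bookkeeping is, in all likelihood, precisely what has kept the statement a conjecture rather than a theorem.
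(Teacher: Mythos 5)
The statement you are trying to prove is presented in the paper as a \emph{conjecture}: the authors offer no proof, only the remark that there is strong numerical evidence for it. So there is no ``paper proof'' to compare against, and your proposal does not close the gap either --- as you yourself acknowledge in your final paragraph.

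What you do establish is correct and worth noting. The verification that $F(x_1,\dots,x_4)=\sum x_i^2$ satisfies the hypothesis of Lemma~\ref{lemma2} on $[\tfrac23,1]$ checks out: with the chain of five subintervals obtained by flipping coordinates from lower to upper third in increasing order of left endpoints, the $k$-th overlap condition $\sum_j a_j-2a_{j_k}+2kt\ge 0$ follows from $\sum_{j\neq j_k}a_j-a_{j_k}\ge 3\cdot\tfrac23-1=1>0$, so indeed $F(\widetilde C^4)=[\tfrac{16}{9},4]$ (and similarly $[\tfrac43,3]$ for three squares). But this single-scale result only yields $\{0\}\cup\bigcup_{r\ge1}9^{-r}[\tfrac{16}{9},4]$ inside $[0,1]$, which misses, among other things, the entire interval $(\tfrac49,1)$. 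The genuine difficulty is exactly the multi-scale step you defer: the set $\{x^2:x\in C\}$ is itself a Cantor-like set, so the needed Minkowski sum of four such sets at independently chosen scales is not accessible to Lemma~\ref{lemma2}, whose hypothesis requires all factors to be refined simultaneously at a common scale. Your proposal correctly identifies this obstruction but supplies no mechanism to overcome it, so the statement remains, for you as for the authors, a conjecture.
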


We need a minimum of four squares, since $(\frac 13)^2 + (\frac
13)^2+(\frac 13)^2 < (\frac 23)^2$, so the open interval $(\frac 13,
\frac 49)$ will be missing from the sum of three squares.

\section{Acknowledgments}
The authors wish to thank the referees and editors for their rapid, sympathetic
and extremely useful suggestions for improving the manuscript. BR
wants to thank Prof. W. A. J. Luxemburg's Math 108 at Caltech in 1970--1971
for introducing him to the beauties of analysis.

\end{document}